\newtheorem{Theo}{Theorem}[section]
\newtheorem{Cor}[Theo]{Corollary}
\newtheorem{Prop}[Theo]{Proposition}
\newcommand{\D}{\mathbb D}
\newcommand{\R}{\mathbb R}
\newcommand{\N}{\mathbb N}
\newcommand{\C}{\mathbb C}
\newcommand{\UU}{\mathscr U}
\newcommand{\BB}{\mathscr B}
\newcommand{\eps}{\varepsilon}
\newcommand{\gibt}[1]{\exists\,#1\;}
\newcommand{\alle}[1]{\forall\,#1\;}
\newcommand{\<}{\langle}
\renewcommand{\>}{\rangle}
\DeclareMathOperator{\Kern}{Kern}
\begin{document}

\title[Diametral dimensions of Fr\'echet spaces]{Diametral dimensions of Fr\'echet spaces}

\author{Lo\"ic Demeulenaere}
\thanks{L.\ Demeulenaere has a grant from the ``Fonds National de la Recherche Scientifique'' (FRIA grant)}
\address{Universit\'e de Li\`ege, D\'epartement de Math\'ematiques, B37, 4000 Li\`ege, Belgium }
\email{Loic.Demeulenaere@ulg.ac.be}

\author{Leonhard Frerick}
\address{Universit\"at Trier, FB IV -- Mathematik, 54286 Trier, Germany}
\email{frerick@uni-trier.de}

\author{Jochen Wengenroth}
\address{Universit\"at Trier, FB IV -- Mathematik, 54286 Trier, Germany}
\email{wengenroth@uni-trier.de}

\dedicatory{Dedicated to the memory of Tozun Terzio\v{g}lu} 

\keywords{Fr\'echet spaces, diametral dimension, Kolmogorov widths, topological invariants}

\subjclass[2010]{46A04, 46A11, 46A63}

\begin{abstract}
The diametral dimension is an important topological invariant in the category of Fr\'echet spaces which has been used, e.g., to distinguish types of Stein manifolds. 
We introduce variants of the classical definition in order to solve an old conjecture of Bessaga, Mityagin, Pe{\l}czy\'nski, and Rolewicz at least for nuclear Fr\'echet spaces.
Moreover, we clarify the relation between an invariant recently introduced by Terzio\v{g}lu and the by now classical condition $(\overline \Omega)$ of Vogt and Wagner.
\end{abstract}

\maketitle

\section{Kolmogorov widths and diametral dimensions}

Kolmogov widths (or diameters) are a quantitative measure for compactness in normed spaces: for absolutely convex sets $V$ and $U$ of a vector space $X$
(typically $U$ is the unit ball of a given norm) and $n\in \N_0$ the $n$-th width is
\[
  \delta_n(V,U)=\inf\{\delta>0: V\subseteq \delta U+L \text{ for an at most $n$-dimensional subspace $L$ of $X$}\}
\]
(the dependence on $X$ is notationally surpressed). If $V$ is bounded with respect to $U$ (i.e., $\delta_0(V,U)<\infty$)
then $V$ is precompact with respect to the
the Minkowski functional of $U$ (which is a seminorm with unit ball $U$) if and only if $\delta_n(V,U)\to 0$.
This elementary fact is proposition 1.2 in Pinkus' book \cite{Pi85} where much more information about $n$-widths can be found.

For a locally convex space (l.c.s) $X$ with the system $\UU_0(X)$ of absolutely convex $0$-neighbourhoods the {\em diametral dimension} of $X$ is the sequence space
\[
  \Delta(X)=\{(\xi_n)_{n\in\N_0}\in \R^{\N_0}: \alle{U\in\UU_0(X)} \gibt{V\in\UU_0(X)} \text{ with } \xi_n\delta_n(V,U) \to 0\}.
\]
This space is a topological invariant, i.e., if $X$ and $Y$ are isomorphic l.c.s.\ then $\Delta(X)=\Delta(Y)$. Even more, if $Y$ is a quotient of $X$ then 
$\Delta(X)\subseteq \Delta(Y)$. Moreover, $X$ is a Schwartz space (i.e., every $0$-neighbourhood $U$ contains another one which is precompact with respect to $U$) 
if and only if $\ell^\infty\subseteq\Delta(X)$.

There are several versions of diametral dimensions of locally convex spaces in the literature, all going back to an idea of Pe{\l}czy\'nski 
\cite{Pel57} from 1957. The formulation above can be found in \cite{Mi61} where Mityagin refers to a joint work with  Bessaga, Pe{\l}czy\'nski, and Rolewicz 
which, to our best knowledge, eventually did not appear in print. 

Implicitely, Mityagin also considered the following variant
\[
  \Delta_b(X)=\{\xi \in \R^{\N_0}: \alle{U\in\UU_0(X)} \alle{B\in\BB(X)} \text{ we have } \xi_n\delta_n(B,U) \to 0\}.
\]
where $\BB(X)$ is the system of all bounded subsets of $X$. The obvious property $\delta_n(B,U) \le S \delta_n(V,U)$ for $B\subseteq SV$ implies
\[
  \Delta(X) \subseteq \Delta_b(X)
\]
for all l.c.s., and all bounded sets of $X$ are precompact if and only if $\ell^\infty\subseteq \Delta_b(X)$.
Referring to the joint work mentioned above, \cite[Proposition 9]{Mi61} claims $\Delta_b(X)=\Delta(X)$ for all Fr\'echet spaces $X$
(actually, $\xi_n\delta_n(B,U)\to 0$ is only required for compact sets $B$ but then the statement is clearly wrong even for Banach spaces).

It was probably soon realized that $\Delta=\Delta_b$ cannot be true in full generality, since the famous Grothendieck-K\"othe example \cite[27.21]{MV} of a Fr\'echet-Montel space $X$ which 
is not Schwartz satisfies
$\Delta(X)=c_0$ and $\Delta_b(X)\supseteq \ell^\infty$. The article \cite{Ter13} of Terzio\v{g}lu describes this explicitely but without definite conclusion when the equality 
is in fact true. The most optimistic conjecture is that $\Delta(X) = \Delta_b(X)$ holds for all Fr\'echet-Schwartz spaces (for spaces which are not Montel we trivially have 
$\Delta(X) = \Delta_b(X)=c_0$).

The main result of this part of the paper is a proof for hilbertizable Fr\'echet-Schwartz spaces, i.e., the seminorms can be given by (semi-) scalar products,
in particular, this includes the important case of nuclear Fr\'echet spaces.

The following variants of the diametral dimensions turn out to be very useful for this purpose:
\begin{align*}
  \Delta^\infty(X)&=\{\xi\in \R^{\N_0}: \alle{U\in\UU_0(X)} \gibt{V\in\UU_0(X)} \text{ with } \xi_n\delta_n(V,U) \text{ bounded}\},\\
  \Delta_b^\infty(X)&=\{\xi\in \R^{\N_0}: \alle{U\in\UU_0(X)} \alle{B\in\BB(X)} \text{  } \xi_n\delta_n(B,U)  \text{ is bounded}\}.
\end{align*}
For every locally convex space $X$
we have the obvious inclusions: 
\begin{large}
\begin{center}
\begin{tabular}{c c c c c c c c c c c c c}
$\Delta^\infty(X)$ & $\subseteq$ &$\Delta_b^\infty(X)$  \\
    \rotatebox[origin=c]{90}{$\subseteq$}  & & \rotatebox[origin=c]{90}{$\subseteq$} \\
$\Delta(X)$& $\subseteq$&$\Delta_b(X).$
\end{tabular}
\end{center}
\end{large}
Although we are interested whether equality holds in the lower row we first consider the upper row:
%

\begin{Prop}\label{DI=DIB}
$\Delta^\infty(X)=\Delta_b^\infty(X)$ holds for every Fr\'echet-Schwartz space $X$.
\end{Prop}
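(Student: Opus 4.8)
Since the inclusion $\Delta^\infty(X)\subseteq\Delta_b^\infty(X)$ has already been recorded, the plan is to establish the reverse inclusion. Fix a decreasing basis $(U_k)_{k\in\N}$ of closed absolutely convex $0$-neighbourhoods of $X$. Because $X$ is Schwartz, for each $k$ there is an index $m(k)>k$ such that $U_{m(k)}$ — and hence every $U_m$ with $m\ge m(k)$ — is precompact with respect to the seminorm of $U_k$. As usual, in the definitions of $\Delta^\infty$ and $\Delta_b^\infty$ it is enough to let $U$ (respectively $U$ and $V$) range over the $U_k$. Now assume, for a contradiction, that some $\xi\in\Delta_b^\infty(X)$ does not lie in $\Delta^\infty(X)$: negating the definition gives an index $k$ with $\sup_{n}\xi_n\delta_n(U_m,U_k)=\infty$ for every $m>k$; since the $\delta_n(U_m,U_k)$ are non-increasing in $n$, this forces $\delta_n(U_m,U_k)>0$ for all such $m,n$. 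The goal is to produce a single bounded set $B\subseteq X$ with $\sup_n\xi_n\delta_n(B,U_k)=\infty$, contradicting $\xi\in\Delta_b^\infty(X)$.

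The set $B$ will be assembled from finite-dimensional pieces extracted from the neighbourhoods $U_m$, and this extraction is the only place where the Schwartz property is used. Put $m_j:=m(k)+j$ for $j\ge1$; by the standing hypothesis choose $N_j$ with $\xi_{N_j}\delta_{N_j}(U_{m_j},U_k)\ge j$, and set $\eps_j:=\tfrac12\delta_{N_j}(U_{m_j},U_k)>0$. Since $U_{m_j}$ is precompact for the seminorm of $U_k$, it admits a finite $\eps_j$-net $Y_j\subseteq U_{m_j}$, that is $U_{m_j}\subseteq Y_j+\eps_j U_k$. Combining this with the elementary estimate $\delta_n(A+\eps U_k,U_k)\le\delta_n(A,U_k)+\eps$ (valid for absolutely convex $A$) applied to $A:=\operatorname{absconv}Y_j$ gives
\[
\delta_{N_j}(U_{m_j},U_k)\le\delta_{N_j}(\operatorname{absconv}Y_j,U_k)+\eps_j,\qquad\text{hence}\qquad\delta_{N_j}(\operatorname{absconv}Y_j,U_k)\ge\tfrac12\,\delta_{N_j}(U_{m_j},U_k).
\]

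Finally I would set $B:=\operatorname{absconv}\bigcup_{j\ge1}Y_j$. Each $\operatorname{absconv}Y_j$ is finite-dimensional, hence bounded, and is contained in $U_{m_j}$ with $m_j\to\infty$; since for every fixed $k'$ only finitely many of the pieces fail to lie in $U_{k'}$, an elementary computation shows that $B$ is bounded in $X$. On the other hand $\operatorname{absconv}Y_j\subseteq B$, so
\[
\xi_{N_j}\delta_{N_j}(B,U_k)\ \ge\ \xi_{N_j}\delta_{N_j}(\operatorname{absconv}Y_j,U_k)\ \ge\ \tfrac12\,\xi_{N_j}\delta_{N_j}(U_{m_j},U_k)\ \ge\ \tfrac{j}{2}
\]
for all $j$, whence $\sup_n\xi_n\delta_n(B,U_k)=\infty$. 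This is the desired contradiction, and therefore $\Delta_b^\infty(X)=\Delta^\infty(X)$.

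The crucial point — the only one requiring more than bookkeeping — is the displayed inequality above: inside the unit neighbourhood $U_m$ one must locate a finite-dimensional set whose $N_j$-th Kolmogorov width over $U_k$ is still comparable to $\delta_{N_j}(U_m,U_k)$. The naive attempt is to look for a finite-dimensional subspace $F$ with $\delta_N(U_m,U_k)(U_k\cap F)\subseteq U_m$, i.e.\ to invoke a Bernstein-width lower bound; but Bernstein and Kolmogorov widths can differ by a factor polynomial in $N$, a loss that $\xi$ need not absorb, so this approach fails in general. Passing instead to a sufficiently fine finite net of the precompact set $U_m$ costs only the additive term $\eps_j$ in the width, which is exactly small enough to preserve the lower bound. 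The remaining ingredients — the additive perturbation estimate for $\delta_n$ and the boundedness of $B$ — are routine consequences of the definitions.
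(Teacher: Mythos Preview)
Your argument is correct and follows the same route as the paper's: assume $\xi\in\Delta_b^\infty(X)\setminus\Delta^\infty(X)$, use the Schwartz property to extract from each $U_{m_j}$ a finite net whose $N_j$-th width over $U_k$ is still comparable to $\delta_{N_j}(U_{m_j},U_k)$, and collect the nets into a bounded set $B$ that contradicts $\xi\in\Delta_b^\infty(X)$. The only cosmetic differences are that the paper chooses the net fineness $1/|\xi_{n_\ell}|$ instead of your $\tfrac12\,\delta_{N_j}(U_{m_j},U_k)$, takes $B$ to be the plain union of the finite nets, and obtains the final estimate by a direct inclusion chase rather than via your additive perturbation inequality for $\delta_n$.
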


\begin{proof} Take $\xi \in\Delta^\infty_b(X)$, a $0$-neighbourhood $U$ in $X$, and assume that
$\xi_n\delta_n(U_\ell,U)$ is unbounded for all $\ell$ where $U_\ell$ is a decreasing basis of $\UU_0(X)$. Then we find a strictly increasing sequence
$n_\ell$ such that $|\xi_{n_\ell}| \delta_{n_\ell} (U_\ell,U) >\ell$ for all $\ell\in\N$.

Since $X$ is Schwartz we may assume that all $U_\ell$ are precompact with respect to $U$. Hence, there are finite sets $F_\ell \subseteq U_\ell$ with 
$$
U_\ell \subseteq  \frac{1}{|\xi_{n_\ell}|} U +F_\ell.
$$
Then $B=\bigcup_\ell F_\ell$ is bounded in $X$ because, for each $p\in\N$, all but the finitely many elements of $F_0\cup\cdots\cup F_{p-1}$ belong to $U_p$.
Since $\xi\in\Delta^\infty_b(X)$ there is a constant $C$ such that
$|\xi_{n_\ell}| \delta_{n_\ell}(B,U) <C$ for all $\ell$. 

For fixed $\ell > C+1$ there is, by definition of $\delta_n(B,U)$, an at most $n_\ell$-dimensional subspace $L$ with
$B\subseteq \frac{C}{|\xi_{n_\ell}|}  U +L$ which implies 
$$
U_\ell \subseteq  \frac{1}{|\xi_{n_\ell}|} U +F_\ell \subseteq \frac{1}{|\xi_{n_\ell}|} U + \frac{C}{|\xi_{n_\ell}|} U +L 
\subseteq \frac{C+1}{|\xi_{n_\ell}|} U +L.
$$
Hence the contradiction $\delta_{n_\ell}(U_\ell,U) \le (C+1)/{|\xi_{n_\ell}|}$. 
\end{proof}

The proposition is trivially true for Fr\'echet spaces which are not Montel (because then $\Delta^\infty(X)=\Delta_b^\infty(X)=\ell_\infty$). It thus holds for 
all {\em quasinormable} Fr\'echet spaces. Up to the change from $c_0$ to $\ell_\infty$ in definition of the diametral dimension this safes
\cite[proposition 1]{Ter13}.

In view of the proposition and the trivial inclusions mentioned above it is enough to investigate whether $\Delta(X)=\Delta^\infty(X)$ holds for Fr\'echet-Schwartz spaces.

If $V$ and $U$ are the unit balls of semi-norms $p\ge q$ the Kolmogorov widths describe approximation properties of the inclusion $(X,p)\hookrightarrow (X,q)$ and it is easy to see 
(and well-known) that
we may pass to the Hausdorff completions of these spaces, that is, $\delta_n(V,U)=\delta_n(T(B_p),B_q)$, where $B_p$ is the unit ball of the completion $X_V$ of $(X,p)/\Kern(p)$ 
and $T$ is the canonical map $X_V\to X_U$ induced by the inclusion. $X_V$ is called the {\it local Banach space} corresponding to $V$. 
If $p$ is induced by a (semi-) scalar product we call it 
the {\it local Hilbert space}.

For an operator $T:X\to Y$ between Banach spaces with unit balls $B_X$ and $B_Y$
we abbreviate $\delta_n(T)=\delta_n(T(B_X),B_Y)$. The velocity of convergence of this sequence is then a measure for the compactness of $T$.

For a scalar sequences  $(\delta_n)_{n\in\N_0}$ we write, as usual,
$$
o(\delta_n)=\{\xi\in \R^{\N_0}: \alle{\eps>0} \gibt{N\in\N} \alle{n\ge N:} |\xi_n|\le \eps |\delta_n|\}.
$$

We now consider the plausible statement that the product (composition) of two compact operators is ''strictly more compact`` than each factor. Unfortunately, we can only prove this
for Hilbert spaces.

\begin{Prop}\label{composition}
   Let $X,Y,Z$ be Hilbert spaces and $T:X\to Y$  and $S:Y\to Z$ be compact operators. Then
$$
\delta_n(S\circ T) \in o(\delta_n(S)) \text{ and } \delta_n(S\circ T) \in o(\delta_n(T)).
$$
\end{Prop}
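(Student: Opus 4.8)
The key structural fact available for Hilbert spaces that is false in general Banach spaces is the \emph{singular value decomposition}: for a compact operator $T:X\to Y$ between Hilbert spaces, $\delta_n(T)=s_{n+1}(T)$, the $(n+1)$-st singular value (with the indexing convention $\delta_0=s_1\ge s_2\ge\cdots$), and there are orthonormal systems $(e_k)$ in $X$ and $(f_k)$ in $Y$ with $T=\sum_k s_k(T)\,\langle\cdot,e_k\rangle f_k$. So the whole statement reduces to a purely sequential inequality about singular values: I want $s_{n+1}(S\circ T)/s_{n+1}(S)\to 0$ and $s_{n+1}(S\circ T)/s_{n+1}(T)\to 0$.

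For this I would use the multiplicativity/superadditivity of Kolmogorov widths (equivalently of approximation numbers, which coincide with singular values in the Hilbert setting): $\delta_{m+n}(S\circ T)\le \delta_m(S)\,\delta_n(T)$. This is the Hilbert-space incarnation of the elementary fact that if $S(B_Y)\subseteq \delta_m(S)B_Z+L$ and $T(B_X)\subseteq \delta_n(T)B_Y+M$ with $\dim L\le m$, $\dim M\le n$, then $S\circ T(B_X)\subseteq \delta_m(S)\delta_n(T)B_Z + (S(M)+L)$ and $\dim(S(M)+L)\le m+n$. Specializing, $\delta_{2n}(S\circ T)\le \delta_n(S)\,\delta_n(T)$. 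Now fix $\eps>0$. Since $T$ is compact, $\delta_n(T)\to 0$, so there is $N$ with $\delta_n(T)\le\eps$ for $n\ge N$; then for $n\ge N$,
\[
  \delta_{2n}(S\circ T)\le \delta_n(S)\,\delta_n(T)\le \eps\,\delta_n(S)\le \eps\,\delta_{2n}(S)
\]
using that $\delta_m(S)$ is nonincreasing in $m$. Wait — I also need the odd indices: for $2n+1\ge 2N$, $\delta_{2n+1}(S\circ T)\le\delta_{2n}(S\circ T)\le\eps\,\delta_{2n}(S)$, and I'd like to compare with $\delta_{2n+1}(S)$, which is smaller, not larger. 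So a crude monotone bound in the wrong direction fails here, and I should instead run the argument as $\delta_{m+n}(S\circ T)\le\delta_m(S)\delta_n(T)$ with the \emph{second} index fixed: choose $N$ with $\delta_N(T)\le\eps$; then for all $m$, $\delta_{m+N}(S\circ T)\le\delta_m(S)\,\delta_N(T)\le\eps\,\delta_m(S)$, and substituting $m=k-N$ gives $\delta_k(S\circ T)\le\eps\,\delta_{k-N}(S)$ for $k\ge N$. This still has a shifted index $\delta_{k-N}(S)$ rather than $\delta_k(S)$, so it does not literally give $\delta_k(S\circ T)\in o(\delta_k(S))$ unless $\delta(S)$ is, say, not too rapidly decaying.

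So the real content — and the main obstacle — is precisely the index shift: I must upgrade ``$\delta_{k}(S\circ T)\le\eps\,\delta_{k-N}(S)$ for large $k$'' to ``$\delta_k(S\circ T)\le\eps\,\delta_k(S)$ for large $k$'', and symmetrically for $T$. This is where I expect to genuinely need the singular-value (Hilbert) structure rather than just width inequalities. The plan is: write $S=\sum_j s_j f_j\otimes g_j$ via the SVD of $S$, split $S=S'+S''$ where $S'$ is the truncation to the first $N$ singular values (finite rank $\le N$) and $S''$ the tail; then $S\circ T=S'\circ T+S''\circ T$, the first summand has rank $\le N$ so contributes nothing to $\delta_k$ for $k\ge N$, giving $\delta_k(S\circ T)\le\delta_{k-N}(S''\circ T)$; but $\|S''\|=s_{N+1}(S)=\delta_N(S)$, so $\delta_{k-N}(S''\circ T)\le\|S''\|\,\delta_{k-N}(T)=\delta_N(S)\,\delta_{k-N}(T)$. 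Hmm, this reintroduces a shift on the $T$ side. The cleaner route: to prove $\delta_k(S\circ T)\in o(\delta_k(S))$, fix $\eps$, pick $N$ with $\delta_N(T)\le\eps$; apply the split to $T=T'+T''$ instead, $T'$ the rank-$\le N$ truncation, $\|T''\|=s_{N+1}(T)=\delta_N(T)\le\eps$; then $S\circ T=S\circ T'+S\circ T''$ with $\operatorname{rank}(S\circ T')\le N$, hence for $k\ge N$,
\[
  \delta_k(S\circ T)\le\delta_{k-N}(S\circ T'')\le\|T''\|\,\delta_{k-N}(S)\le\eps\,\delta_{k-N}(S).
\]
The residual shift $\delta_{k-N}(S)$ vs.\ $\delta_k(S)$ remains, so I must argue that a fixed finite shift is harmless at the level of $o(\cdot)$: this is true provided $\liminf \delta_{k}(S)/\delta_{k-N}(S)>0$, which can fail for superexponentially decaying sequences. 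I think the correct fix, and the step I would spend real effort on, is to avoid the shift entirely by a two-sided argument: given $\eps$, choose $N$ with both $\delta_N(S)\le\sqrt\eps\,$-type control, write $k=m+n$ with $m,n\approx k/2$, and use $\delta_k(S\circ T)\le\delta_m(S)\delta_n(T)$ together with monotonicity on \emph{both} factors to land on $\delta_{\lceil k/2\rceil}$ of each; then separately observe that for the conclusion $o(\delta_k(S))$ one only needs to beat $\delta_k(S)$, and since we may also harvest a factor $\delta_n(T)\to0$ we can absorb the gap $\delta_{\lceil k/2\rceil}(S)/\delta_k(S)$ — I would verify that whenever $T$ is compact this product still tends to $0$. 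In summary: set up the SVD, reduce to singular-value sequences, invoke $\delta_{m+n}(S\circ T)\le\delta_m(S)\delta_n(T)$, do the finite-rank truncation split on the factor one is \emph{not} dividing by, and dispatch the index-shift bookkeeping — that last point is the crux.
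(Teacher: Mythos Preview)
Your proposal has a genuine gap, and you have in fact put your finger on it yourself: the index shift is fatal, and the hand-wave ``I would verify that whenever $T$ is compact this product still tends to $0$'' is false. Take for instance $\delta_n(S)=\exp(-2^n)$ and $\delta_n(T)=1/\log(n+3)$. Then for any fixed $N\ge 1$ one has $\delta_{k-N}(S)/\delta_k(S)=\exp(2^k-2^{k-N})\to\infty$, and this blows up faster than any negative power of $\delta_N(T)$; the same happens with your $\delta_{\lceil k/2\rceil}(S)\,\delta_{\lfloor k/2\rfloor}(T)/\delta_k(S)$ variant. The point is that all of your estimates use only the two scalar sequences $(\delta_n(S))$ and $(\delta_n(T))$ and the abstract inequality $\delta_{m+n}(S\circ T)\le\delta_m(S)\delta_n(T)$, and from that information alone the conclusion simply does not follow: nothing prevents $\delta_n(S)$ from decaying so fast that any loss of even one index swamps whatever smallness you gain from $\delta_N(T)$.

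The paper's argument avoids the shift entirely by exploiting the \emph{geometric} interaction between $S$ and $T$, not just their separate width sequences. Write the Schmidt representation $S=\sum_{k\ge 0} s_k\langle\cdot,e_k\rangle f_k$ with $s_k=\delta_k(S)$, and take as candidate subspace for $\delta_n(S\circ T)$ the span of $f_0,\dots,f_{n-1}$. A direct computation gives
\[
  \delta_n(S\circ T)\le s_n\cdot\sup_{y\in K}\Bigl(\sum_{k\ge n}|\langle y,e_k\rangle|^2\Bigr)^{1/2}
  = \delta_n(S)\cdot\sup_{y\in K}\|\pi_n(y)\|,
\]
where $K=\overline{T(B_X)}$ is compact and $\pi_n$ is the orthogonal projection onto $\overline{\mathrm{span}}\{e_k:k\ge n\}$. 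The functions $y\mapsto\|\pi_n(y)\|$ are equicontinuous and converge pointwise to $0$, hence uniformly on the compact set $K$; this yields $\delta_n(S\circ T)/\delta_n(S)\to 0$ with no index bookkeeping at all. The second assertion then follows by duality, since $\delta_n(R)=\delta_n(R^*)$ for Hilbert space operators. The moral: the correct subspace is the one dictated by the SVD of the factor you are dividing by, and the compactness of the \emph{other} factor is used not through its widths but through uniform smallness of tails on its (compact) image.
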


\begin{proof}
The advantage of the Hilbert space setting is that the Kolmogorov widths coincide with the singular numbers of the compact operator, see, e.g., \cite[chapter IV]{Pi85} or
\cite{Vo00}.
Let thus 
$$
S=\sum\limits_{k=0}^\infty s_k \< \cdot,e_k\> f_k
$$ 
be a Schmidt representation of $S$ with orthonormal systems $(e_k)_k$ and
$(f_k)_k$ in $Y$ and $Z$, respectively, and the decreasing sequence of singular numbers $s_k=\delta_k(S)$. 
Taking the span of $f_0,\ldots,f_{n-1}$ as a candidate for the infimum in the definition of $\delta_n(S\circ T)$ we get
\begin{align*}
  \delta_n(S\circ T)^2 & \le \sup\left \{\left\| \sum_{k=n}^\infty s_k\<T(x),e_k\> f_k\right\|_Z^2: \|x\|_X\le 1\right\} \\
  &=  \sup\left \{\sum_{k=n}^\infty s_k^2 |\<T(x),e_k\>|^2: \|x\|_X\le 1\right\} \\
  & \le s_n^2 \sup\left \{\sum_{k=n}^\infty  |\<y,e_k\>|^2: y \in K\right\} 
\end{align*}
where $K=\overline{T(B_X)}$ is compact in $Y$. The sequence of functions 
$$
r_n(y)= \left(\sum_{k=n}^\infty  |\<y,e_k\>|^2\right)^{1/2} = \|\pi_n(y)\|_Y
$$
with the orthogonal projections $\pi_n$ onto the closed spans of 
$\{e_k: k\ge n\}$ is equicontinuous and converges pointwise to $0$. It therefore converges uniformly to $0$ on the compact set $K$.
This proves $\delta_n(S\circ T) \in o(\delta_n(S))$.

The other assertion then follows by duality since 
\[
  \delta_n(S\circ T) = \delta_n((S \circ T)^*) = \delta_n(T^*\circ S^*) \in o(\delta_n(T^*)) = o(\delta_n(T)). \qedhere
\]
\end{proof}

\begin{Prop}\label{D=DI}
$\Delta(X)=\Delta^\infty(X)$ holds for every hilbertizable Schwartz space.
\end{Prop}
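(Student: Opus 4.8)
The plan is to establish the only nontrivial inclusion, $\Delta^\infty(X)\subseteq\Delta(X)$, by splitting one ``$\Delta^\infty$-step'' into two and cashing in the gain provided by Proposition~\ref{composition}. Fix $\xi\in\Delta^\infty(X)$ and an absolutely convex $0$-neighbourhood $U$; one must find $W\in\UU_0(X)$ with $\xi_n\delta_n(W,U)\to 0$.

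First I would produce a chain $W\subseteq V\subseteq U$ of absolutely convex $0$-neighbourhoods together with the induced maps $S\colon X_V\to X_U$ and $T\colon X_W\to X_V$ of local Banach spaces such that (a) $S$ and $T$ are \emph{compact} operators between \emph{Hilbert} spaces, and (b) both $|\xi_n|\delta_n(S)=|\xi_n|\delta_n(V,U)$ and $|\xi_n|\delta_n(T)=|\xi_n|\delta_n(W,V)$ are bounded. To do so, apply the definition of $\Delta^\infty(X)$ to $U$ to get some $V_0$ with $\xi_n\delta_n(V_0,U)$ bounded; replacing $V_0$ by $V_0\cap U$ one may assume $V_0\subseteq U$. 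Since $X$ is Schwartz, $V_0$ contains an absolutely convex $0$-neighbourhood $V$ which is precompact with respect to $V_0$; by monotonicity $\delta_n(V,U)\le\delta_n(V_0,U)$, so $\xi_n\delta_n(V,U)$ stays bounded, and the factorisation $X_V\to X_{V_0}\to X_U$ of $S$ through the \emph{compact} map $X_V\to X_{V_0}$ (precompactness of $V$ with respect to $V_0$) and the \emph{bounded} map $X_{V_0}\to X_U$ (from $V_0\subseteq U$) shows that $S$ is compact; hilbertizability makes all these local spaces Hilbert spaces. Running the same construction with $U$ replaced by $V$ yields the required $W\subseteq V$.

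Then it remains to observe that the canonical map $X_W\to X_U$ is exactly $S\circ T$, so $\delta_n(W,U)=\delta_n(S\circ T)$, and to apply Proposition~\ref{composition}: $\delta_n(S\circ T)\in o(\delta_n(S))$. Concretely, given $\eps>0$ there is $N$ with $\delta_n(S\circ T)\le\eps\,\delta_n(S)$ for all $n\ge N$; multiplying by $|\xi_n|$ and using the bound $|\xi_n|\delta_n(S)\le C$ from (b) gives $|\xi_n|\delta_n(W,U)\le\eps C$ for $n\ge N$. Hence $\xi_n\delta_n(W,U)\to 0$ and $\xi\in\Delta(X)$.

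The genuine mathematical work is already done in Proposition~\ref{composition}; the only point requiring care here is the bookkeeping of the neighbourhood chain, i.e.\ arranging that the linking maps $S$ and $T$ are honest compact operators between Hilbert spaces without destroying the $\Delta^\infty$-estimates. This is precisely where both hypotheses enter: the Schwartz property lets one shrink each neighbourhood to one whose link is compact while monotonicity of the Kolmogorov widths preserves the estimate, and hilbertizability guarantees that Proposition~\ref{composition} is applicable to $S\circ T$. I do not expect any further obstacle; everything else is the elementary monotonicity $\delta_n(V,U)\le\delta_n(V_0,U)$ and the standard identification $\delta_n(W,U)=\delta_n(S\circ T)$ for nested neighbourhoods.
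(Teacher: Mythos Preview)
Your proof is correct and follows the same route as the paper: build a chain $W\subseteq V\subseteq U$ of Hilbertian $0$-neighbourhoods with compact linking maps and a $\Delta^\infty$-bound, then invoke Proposition~\ref{composition} to upgrade boundedness of $\xi_n\delta_n$ to convergence. The only cosmetic differences are that the paper states explicitly at the outset that one may restrict to the basis of Hilbert disks in $\UU_0(X)$ (which you use tacitly when asserting that the local spaces are Hilbert), and it applies the other half of Proposition~\ref{composition}, bounding $\xi_n\delta_n(W,V)$ instead of $\xi_n\delta_n(V,U)$; either choice works.
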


\begin{proof} In the definitions of the diametral dimensions we may replace $\UU_0(X)$ by the system of all $0$-neighbourhoods which are unit balls of semi-norms induced by scalar products.
   Given $\xi\in \Delta^\infty(X)$ and $U\in\UU_0(X)$ we choose $V\in \UU_0(X)$ such that the canonical map $T:X_V\to X_U$ between the local Hilbert spaces is
compact and then we choose $W\in \UU_0(X)$ such that $\xi_n\delta_n(W,V)$ is bounded and $S:X_W \to X_V$ is compact. The previous proposition then implies 
$\xi_n \delta_n(W,U) \in o(\xi_n\delta_n(W,V))$, hence $\xi_n \delta_n(W,U)\to 0$.
\end{proof}

We remark that either statement in proposition \ref{composition} for Banach instead of Hilbert spaces 
(i.e., the product of two compact operators between Banach spaces is ''strictly more compact`` than at least one of the factors) 
would give \ref{D=DI} for all Schwartz spaces.

Combining \ref{DI=DIB} and \ref{D=DI} we can now confirm the claim of Bessaga, Mityagin, Pe{\l}czy\'nski, and Rolewicz at least for hilbertizable Fr\'echet-Schwartz spaces:

\begin{Theo}\label{theo}
 $\Delta(X)=\Delta_b(X)=\Delta^\infty(X)=\Delta_b^\infty(X)$ holds for every hilbertizable Fr\'echet-Schwartz space and, in particular, for every nuclear Fr\'echet space.
\end{Theo}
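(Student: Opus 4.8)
The plan is to simply assemble the pieces already in place. The chain of inclusions displayed before Proposition \ref{DI=DIB}, valid for every locally convex space, gives
\[
  \Delta(X)\subseteq\Delta^\infty(X)\subseteq\Delta_b^\infty(X),\qquad \Delta(X)\subseteq\Delta_b(X)\subseteq\Delta_b^\infty(X).
\]
So it suffices to prove the two reverse inclusions $\Delta_b^\infty(X)\subseteq\Delta^\infty(X)$ and $\Delta^\infty(X)\subseteq\Delta(X)$, since together with the trivial ones they force all four spaces to coincide.

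First I would invoke Proposition \ref{DI=DIB}: a hilbertizable Fr\'echet--Schwartz space is in particular a Fr\'echet--Schwartz space, so $\Delta^\infty(X)=\Delta_b^\infty(X)$. Next I would apply Proposition \ref{D=DI}: a hilbertizable Fr\'echet--Schwartz space is a hilbertizable Schwartz space, so $\Delta(X)=\Delta^\infty(X)$. Combining these two equalities with the trivial inclusion $\Delta(X)\subseteq\Delta_b(X)\subseteq\Delta_b^\infty(X)=\Delta^\infty(X)=\Delta(X)$ squeezes $\Delta_b(X)$ between two copies of $\Delta(X)$, yielding $\Delta(X)=\Delta_b(X)=\Delta^\infty(X)=\Delta_b^\infty(X)$.

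For the ``in particular'' clause I would recall the standard fact that every nuclear Fr\'echet space is both hilbertizable and Schwartz: nuclearity allows one to factor the linking maps through Hilbert--Schmidt operators, so the seminorms can be chosen to come from scalar products, and nuclear spaces are always Schwartz. Hence the general statement applies.

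There is essentially no obstacle left at this stage — the theorem is a formal corollary of Propositions \ref{DI=DIB} and \ref{D=DI} together with the elementary inclusion diagram. The only point requiring a word of care is the reduction to nuclear spaces, i.e.\ checking that ``nuclear Fr\'echet'' really does imply ``hilbertizable Fr\'echet--Schwartz'', but this is classical and needs no new argument.

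\begin{proof}
By Proposition \ref{DI=DIB} we have $\Delta^\infty(X)=\Delta_b^\infty(X)$, and by Proposition \ref{D=DI} we have $\Delta(X)=\Delta^\infty(X)$. Together with the trivial inclusions $\Delta(X)\subseteq\Delta_b(X)\subseteq\Delta_b^\infty(X)$ displayed above this gives
\[
  \Delta(X)\subseteq\Delta_b(X)\subseteq\Delta_b^\infty(X)=\Delta^\infty(X)=\Delta(X),
\]
so all four spaces coincide. Finally, every nuclear Fr\'echet space is hilbertizable (the linking maps factor through Hilbert--Schmidt operators, so the seminorms may be taken to be induced by scalar products) and Schwartz, so the statement applies in particular to nuclear Fr\'echet spaces.
\end{proof}
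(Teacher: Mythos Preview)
Your proof is correct and follows exactly the same approach as the paper, which simply states that the theorem follows by combining Propositions \ref{DI=DIB} and \ref{D=DI} with the trivial inclusions. Your write-up is in fact more detailed than the paper's, which gives no formal proof environment at all.
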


We do not know if $\Delta_b(X)=\Delta_b^\infty(X)$ holds for all Fr\'echet-Schwartz spaces. However, the same method as in \ref{D=DI} gives this for hilbertizable Fr\'echet-Montel spaces 
(note that the Grothendieck-K\"othe example can be chosen hilbertizable so that the following statement is not contained in \ref{theo}):

\begin{Prop}
$\Delta_b(X)=\Delta_b^\infty(X)$ holds for every hilbertizable Fr\'echet-Montel space.  
\end{Prop}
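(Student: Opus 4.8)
The plan is to mimic the proof of Proposition~\ref{D=DI}, replacing the role of a chosen basic $0$-neighbourhood by an arbitrary bounded set. Fix $\xi\in\Delta_b^\infty(X)$ and an absolutely convex $0$-neighbourhood $U$; as before we may assume $U$ is the unit ball of a seminorm induced by a scalar product, and we must show $\xi_n\delta_n(B,U)\to 0$ for every bounded $B\in\BB(X)$. The idea is to insert an intermediate local Hilbert space: since $X$ is Fr\'echet--Montel, it is in particular a Schwartz space, so we may pick $V\in\UU_0(X)$ (again the unit ball of a Hilbertian seminorm) such that the canonical linking map $T\colon X_V\to X_U$ is compact. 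Now the image of $B$ in the local Hilbert space $X_V$ is bounded, hence by Montelness (the local space $X_V$ need not be finite-dimensional, but the \emph{closure of the image of $B$} in $X_V$ is compact because bounded sets of a Montel space are relatively compact and the canonical map $X\to X_V$ is continuous) we get that $\overline{B}$ maps to a compact subset $K$ of $X_V$.

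With this setup I would run the singular-number estimate from Proposition~\ref{composition} directly, rather than quoting it as a black box, because there is no ``second operator'' $S$ here --- instead the compact set $K\subseteq X_V$ plays the role that $\overline{T(B_X)}$ played in that proof. Concretely, write the Schmidt representation $T=\sum_{k\ge0}s_k\<\cdot,e_k\>f_k$ of the compact linking map $T\colon X_V\to X_U$ with $s_k=\delta_k(T)$ decreasing. Taking the span of $f_0,\dots,f_{n-1}$ as the candidate subspace gives, exactly as in Proposition~\ref{composition},
\[
  \delta_n(B,U)^2 \le s_n^2\,\sup\Bigl\{\textstyle\sum_{k\ge n}|\<y,e_k\>|^2 : y\in K\Bigr\}
  = s_n^2\,\sup\{\,\|\pi_n(y)\|_V^2 : y\in K\,\},
\]
where $\pi_n$ is the orthogonal projection onto $\overline{\mathrm{span}}\{e_k:k\ge n\}$. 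The functions $y\mapsto\|\pi_n(y)\|_V$ are equicontinuous and converge pointwise to $0$, hence uniformly to $0$ on the compact set $K$; so $\delta_n(B,U)\in o(s_n)=o(\delta_n(T))$. Since $\xi\in\Delta_b^\infty(X)$, in particular $\xi_n\delta_n(T)=\xi_n\delta_n(V,U)$ is bounded (apply the definition with the bounded set $V$ itself, which is bounded because $X$ is Montel, or note $V$ can be replaced by a suitable bounded set inside it); therefore $\xi_n\delta_n(B,U)\in o(\xi_n\delta_n(T))$ is a null sequence, as required. The reverse inclusion $\Delta_b(X)\subseteq\Delta_b^\infty(X)$ is one of the trivial inclusions already recorded.

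The one point that needs care --- and which I expect to be the main obstacle --- is justifying that $\xi_n\delta_n(T)$ is bounded, i.e.\ that the information $\xi\in\Delta_b^\infty(X)$ can be applied to the pair $(V,U)$ and not merely to pairs (bounded set, $0$-neighbourhood). In a Fr\'echet--Montel space every $0$-neighbourhood contains a bounded $0$-neighbourhood? No --- rather, one should observe that it suffices to find \emph{some} bounded set $B'$ with $\delta_n(V,U)\le S\,\delta_n(B',U)$ for a constant $S$; but $V$ itself is bounded in a Montel space only if $X$ is normable, so instead I would argue differently: choose $V$ small enough that additionally $\delta_n(V,U)$ already decays fast, or, cleanly, apply the same intermediate-space trick once more. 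Pick $W\in\UU_0(X)$ with $S\colon X_W\to X_V$ compact; then $B$ has relatively compact image $K'$ in $X_V$ (Montel), and one estimates $\delta_n(B,U)$ through the composition $T\circ S$ restricted to $K'$ exactly as above, needing only that $\xi_n\delta_n(W,V)$ — no: the honest fix is that $\Delta_b^\infty$ ``sees'' the linking maps because for Montel $X$ every $\delta_n(V,U)$ equals $\delta_n(B,U)$ for a suitable bounded $B\subseteq V$ with $V\subseteq \overline{B}$ (relative compactness of $V$'s image in $X_U$), so no extra step is actually needed. I would write the argument with that observation made explicit at the start.
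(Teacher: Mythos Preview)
There are two genuine gaps. First, Fr\'echet--Montel does \emph{not} imply Schwartz: the Grothendieck--K\"othe example mentioned just before this proposition is precisely a hilbertizable Fr\'echet--Montel space that is not Schwartz, and covering such spaces is the whole point of the proposition (otherwise it would be contained in Theorem~\ref{theo}). Hence you cannot in general choose $V$ so that the linking map $T\colon X_V\to X_U$ is compact, and without compactness of $T$ the Schmidt representation and the singular-number estimate collapse. Second, even in the Schwartz case the obstacle you yourself flag is real and your fixes do not work: applying $\xi\in\Delta_b^\infty(X)$ to the pair $(V,U)$ requires $V$ to be bounded, and a $0$-neighbourhood $V$ with $V\subseteq\overline{B}$ for some bounded $B$ forces $V$ bounded and $X$ normable. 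The ``relative compactness of $V$'s image in $X_U$'' gives no such bounded $B$.

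The paper's remedy is to reverse the direction of the intermediate step: instead of shrinking $U$ to a smaller $0$-neighbourhood $V$, one enlarges $B$ to a bigger bounded set $D$. Using that a hilbertizable Fr\'echet space has a fundamental system of bounded sets consisting of Hilbert balls and satisfies Grothendieck's strict Mackey condition, one finds a bounded Hilbert ball $D\supseteq B$ for which the inclusion $\mathrm{span}(B)\hookrightarrow\mathrm{span}(D)$ is a compact map between Hilbert spaces. Proposition~\ref{composition} applied to $\mathrm{span}(B)\hookrightarrow\mathrm{span}(D)\to X_U$ then yields $\delta_n(B,U)\in o(\delta_n(D,U))$, and since $D$ is bounded the hypothesis $\xi\in\Delta_b^\infty(X)$ gives $\xi_n\delta_n(D,U)$ bounded directly. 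This sidesteps both problems: no Schwartz property is needed (compactness lives on the bounded side, where Montel supplies it), and the $\Delta_b^\infty$ hypothesis applies immediately.
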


\begin{proof}
 We use that a hilbertizable Fr\'echet space has a fundamental system of bounded sets consisting of unit balls of Hilbert spaces
 (see, e.g., the end of the proof of \cite[29.16]{MV}) and that Fr\'echet spaces satisfy the 
 {\it strict Mackey condition} introduced by Grothendieck, i.e., for every bounded set $B$ there is an absolutely convex
 bounded set $D\supseteq B$ whose Minkowski functional induces on $B$ the topology of $X$, see \cite[theorem 5.1.27]{PCB}. 
 In our case we find thus for each bounded Hilbert ball $B$ another bounded Hilbert ball 
 $D$ such that span$(B) \hookrightarrow$ span$(D)$ is a compact inclusion between Hilbert spaces. For $U\in \UU_0(X)$ and the associated local Hilbert space $X_U$ we can thus apply
 proposition \ref{composition} to span$(B) \hookrightarrow$ span$(D) \to X_U$ and obtain $\delta_n(B,U)\in o(\delta_n(D,U))$.
\end{proof}

\section{Prominent sets}

In his last publication \cite{Ter13} T.\ Terzio\v{g}lu called a bounded subset $B$ of a l.c.s.\ $X$ {\it prominent} if
\[
  \Delta(X)=\{\xi\in\R^{\N_0}: \xi_n \delta_n(B,U)\to 0 \text{ for all } U\in\UU_0(X)\}.
\]
Having a prominent set is obviously a topological invariant, and since the right hand side above contains $\Delta_b(X)$ we have 
$\Delta(X)=\Delta_b(X)$ for all l.c.s.\ with a prominent bounded set.

By an elegant application of Grothendieck's factorization theorem Terzio\v{g}lu \cite[proposition 3]{Ter13} proved that an absolutely convex bounded set $B$ 
of a Fr\'echet space is prominent if and only, for every $U\in\UU_0(X)$, there are $V\in \UU_0(X)$ and $C\ge 0$ such that $\delta_n(V,U)\le C\ \delta_n(B,V)$ for all $n\in\N_0$.

Moreover, he proved the existence of prominent bounded sets in so-called $G_1$ spaces which form a class of K\"othe sequence spaces containing power series spaces of finite type,
and he showed that power series spaces of infinite type do not have prominent sets.

There are some more topological invariants distinguishing power series spaces of finite and infinite type, in particular condition $(\overline \Omega)$ of Vogt and Wagner, 
see \cite[chapter 29]{MV}.
We recall that a Fr\'echet space $X$ with fundamental sequence of semi-norms $\|\cdot\|_k$ and corresponding dual norms $\|\cdot\|_k^*$ satisfies $(\overline \Omega)$ if
\[
  \alle{k\in\N} \gibt{\ell\ge k} \alle{m\ge \ell} \gibt{C>0} \text{ with } \left(\|\varphi\|_\ell^*\right)^2\le  C\, \|\varphi\|_k^* \|\varphi\|_m^* \text{ for all } \varphi\in X'.
\]

\begin{Theo}\label{omegaprominent}
Every Fr\'echet space $X$ with  $( \overline{\Omega})$ has a prominent set.
\end{Theo}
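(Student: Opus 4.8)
The plan is to invoke Terzio\v{g}lu's characterization of prominent sets recalled above: it suffices to produce a single absolutely convex bounded set $B\subseteq X$ such that for every $k$ there are $\ell\ge k$ and $C\ge 0$ with $\delta_n(U_\ell,U_k)\le C\,\delta_n(B,U_\ell)$ for all $n$, where $U_j=\{x:\|x\|_j\le 1\}$. First I would recast $(\overline\Omega)$ in a shape adapted to Kolmogorov widths. Using $\sqrt{ab}=\inf_{s>0}\tfrac12(s^{-1}a+sb)$, the inequality $\bigl(\|\varphi\|_\ell^*\bigr)^2\le C\,\|\varphi\|_k^*\,\|\varphi\|_m^*$ $(\varphi\in X')$ is, up to a fixed constant, the same as demanding $\|\varphi\|_\ell^*\le s^{-1}C\|\varphi\|_k^*+s\|\varphi\|_m^*$ for every $s>0$; polarising and using the bipolar theorem this turns into $U_\ell\subseteq\overline{s\,U_k+\tfrac{C}{s}\,U_m}$ for every $s>0$. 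So $(\overline\Omega)$ reads: for each $k$ there is $\ell=\ell(k)\ge k$ such that for each $m\ge\ell$ there is $C_{k,m}$ with $U_\ell\subseteq\overline{s\,U_k+\tfrac{C_{k,m}}{s}\,U_m}$ for all $s>0$.

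For the bounded set I would take a ``diagonal'' intersection $B:=\bigcap_m\sigma_m U_m$, where $\sigma_m\uparrow\infty$ is chosen to dominate the constants produced by $(\overline\Omega)$, say $\sigma_m=\max\{\sigma_{m-1}+1,\ C_{k,m}:k<m,\ \ell(k)\le m\}$ (a maximum over finitely many indices), enlarged if necessary to ensure $B\ne\{0\}$, e.g.\ $\sigma_m\ge\|x_0\|_m$ for a fixed $x_0\ne 0$ (in the K\"othe case $B$ already contains the unit ball of the sup-norm). Then $B$ is closed, absolutely convex and bounded, since $B\subseteq\sigma_m U_m$ for every $m$ and hence $B$ is absorbed by every $0$-neighbourhood.

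To verify Terzio\v{g}lu's condition, fix $k$, put $\ell=\ell(k)$, fix $n$ and set $\delta:=\delta_n(B,U_\ell)$, so that $B\subseteq(\delta+\varepsilon)U_\ell+L$ for some $L$ with $\dim L\le n$. Choosing $s=\delta$ in the geometric form of $(\overline\Omega)$ gives $U_\ell\subseteq\overline{\delta\,U_k+\tfrac{C_{k,m}}{\delta}\,U_m}$ for every $m\ge\ell$, and it remains to absorb the second term: one needs an index $m=m(n)$ with $\tfrac{C_{k,m}}{\delta}U_m\subseteq C'\delta\,U_k+L'$ for a subspace $L'$ of dimension $\le n$ (or $\le 2n$, which is harmless for $\Delta(X)$), equivalently $C_{k,m}\,\delta_n(U_m,U_k)\le C'\delta^2$. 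Here one combines $\sigma_m\ge C_{k,m}$, the inclusion $B\subseteq\sigma_m U_m$ (hence $\|\cdot\|_B^*\le\sigma_m\|\cdot\|_m^*$, where $\|\cdot\|_B^*$ is the gauge of $B^\circ$), and the duality between Kolmogorov and Gelfand widths, under which $\delta_n(U_m,U_k)$ and $\delta_n(B,U_\ell)$ become Gelfand widths of the adjoint inclusions $(X',\|\cdot\|_k^*)\hookrightarrow(X',\|\cdot\|_m^*)$ and $(X',\|\cdot\|_\ell^*)\hookrightarrow(X',\|\cdot\|_B^*)$; on the dual side the estimate needed is exactly $(\overline\Omega)$ applied to the functional realising the Gelfand width. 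This yields $\delta_n(U_\ell,U_k)\le C\,\delta_n(B,U_\ell)$ for all $n$, so $B$ is prominent.

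I expect the main obstacle to be precisely the dependence of the constant in $(\overline\Omega)$ on the ``far'' index $m$, which forbids simply letting $m\to\infty$ to replace $U_m$ by $B$ in $U_\ell\subseteq\overline{s\,U_k+\tfrac{C}{s}\,U_m}$; the diagonal choice of $(\sigma_m)$ is tailored to neutralise it. The delicate bookkeeping is then to synchronise, for each fixed $n$, the index $m$ coming from $(\overline\Omega)$ with the $n$-dimensional subspace realising $\delta_n(B,U_\ell)$ so that no dimension or codimension exceeds $n$ (up to the harmless factor $2$); passing to adjoints and Gelfand widths is the natural device for keeping this under control.
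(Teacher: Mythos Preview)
Your reduction to Terzio\v{g}lu's criterion and the geometric reformulation of $(\overline\Omega)$ are both correct, and you correctly isolate the real difficulty: the constant $C_{k,m}$ depends on the far index $m$, so one cannot simply let $m\to\infty$. However, the step where you claim to ``absorb the second term'' does not go through. Concretely, you need, for each $n$, some $m=m(n)$ with
\[
C_{k,m}\,\delta_n(U_m,U_k)\le C'\,\delta_n(B,U_\ell)^2,
\]
and you propose to obtain this from $\sigma_m\ge C_{k,m}$, $B\subseteq\sigma_m U_m$, and duality. But $B\subseteq\sigma_m U_m$ only yields $\|\varphi\|_B^*\le\sigma_m\|\varphi\|_m^*$, i.e.\ a \emph{lower} bound for $\|\varphi\|_m^*$ (equivalently, $\delta_n(B,\cdot)\le\sigma_m\,\delta_n(U_m,\cdot)$); to control $\delta_n(U_m,U_k)$ from above by $\delta_n(B,U_\ell)$ you would need the opposite inequality, which would amount to $B\supseteq cU_m$ and is false for bounded $B$. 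Applying $(\overline\Omega)$ ``to the functional realising the Gelfand width'' gives $\bigl(\|\varphi_0\|_\ell^*\bigr)^2\le C_{k,m}\|\varphi_0\|_k^*\|\varphi_0\|_m^*$, and again the only way to connect $\|\varphi_0\|_m^*$ to $\|\varphi_0\|_B^*$ points the wrong way. So the duality route, as sketched, does not close the estimate.

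The paper avoids this obstacle altogether: instead of working with the raw reformulation $U_\ell\subseteq\overline{sU_k+\tfrac{C_{k,m}}{s}U_m}$, it invokes lemmas 29.13 and 29.16 of Meise--Vogt, whose combination produces a \emph{single} bounded Banach disk $B$ and, for each $U$, a $V$ and a constant $C$ (independent of any further index) with $V\subseteq rU+\tfrac{C}{r}B$ for all $r>0$. In other words, the $m$-dependence has already been absorbed into $B$ at the level of the inclusion itself, not merely at the level of the weights $\sigma_m$. Once this uniform inclusion is available, the Kolmogorov-width estimate is obtained by an elementary iteration: from $B\subseteq\delta V+L$ and $r=2C\delta$ one gets $V\subseteq 2C\delta\,U+\tfrac12 V+L$; feeding this back into itself and summing the geometric series gives $V\subseteq 5C\delta\,U+L$, hence $\delta_n(V,U)\le 5C\,\delta_n(B,V)$. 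The non-trivial content is thus entirely in the cited lemmas, and that is precisely the step your diagonal construction does not replace.
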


\begin{proof}
 A combination of lemmas 29.13 and 29.16 in \cite{MV} shows that 
there exists a bounded Banach disk $B$ of $X$ such that, for every $U\in \UU_0(X)$, there exist $V\in \UU_0(X)$ and $C>0$ with 
\[
V \subseteq r U + \frac{C}{r} B \text{ for all $r>0$}.
\]
 Fix $U\in\UU_0(X)$ and take $V$ and $C$ as above.
Let $n \in \N_0$ and $\delta > \delta_n (B,V)$. Then, there exists an at most $n$-dimensional subspace
$L$ with $B \subseteq \delta V + L$. For $r = 2 C \delta$, we obtain
\[
V \subseteq r U + \frac{C}{r} B \subseteq r U + \frac{C \delta}{r} V + L = 2 C \delta U + \frac{1}{2}V + L.
\]
Inserting this inclusion into its right hand side, we get 
\[
V \subseteq 2 C \delta U + \frac{1}{2} \left ( 2 C \delta U + \frac{1}{2} V + L \right ) + L \subseteq 2 C \delta \left ( 1 + \frac{1}{2} \right ) U + \frac{1}{4} V + L.
\]
By iteration, for every $j \in \N$, this implies 
\[
V \subseteq 2 C \delta \left ( 1 + \frac 12 + \cdots + \frac{1}{2^{j-1}} \right ) U + \frac{1}{2^j}V + L \subseteq 4 C \delta U + \frac{1}{2^j} V + L.
\]
Choosing $j$ such that $\frac{1}{2^j}V \subseteq C \delta U$ we get
\[
V \subseteq 5 C \delta U + L.
\]
We have shown $\delta_n (V,U) \leq 5 C \delta$ and thus
\[
\delta_n (V,U) \leq 5 C \delta_n (B,V).
\]
Hence $B$ is a prominent set of $E$.
\end{proof}

We will next show that the implication in this theorem is in fact a characterization for {\it regular} K\"othe spaces: We recall that for a matrix
$A=(a_k(n))_{(k,n)\in\N_0^2}$ of positive weights with $a_k(n)\le a_{k+1}(n)$ the corresponding K\"othe space (of order $1$ -- but everything below holds for all other orders) is
\[
  \lambda_1(A)=\{(x_n)_{n\in\N_0}\in\C^{\N_0}: \|x\|_k=\sum_{n=0}^\infty a_k(n)|x_n| <\infty \text{ for all $k\in\N_0$}\}.
\]
The space and the matrix are called {\it regular} if $n\mapsto a_{k}(n)/a_{k+1}(n)$ is decreasing for every $k\in\N_0$. The advantage of regularity is 
that the Kolmogorov widths are then very easy to calculate: for the unit balls $U_k$ of the semi-norms $\|\cdot\|_k$ and $\ell\ge k$ we have e.g.\ by\cite{Ter08}
\[
\delta_n(U_\ell,U_k)= \frac{a_k(n)}{a_\ell(n)} \text{ for all $n\in\N_0$.}  
\]
More information about diametral dimensions of K\"othe spaces can be found in \cite{Ter08,BaDe16}.

A characterization of $( \overline{\Omega})$ for K\"othe spaces is well-known, e.g., \cite[Satz1.10]{Wa80}, and in fact
easily obtained from $\|\pi_n\|_\ell^*=1/a_\ell(n)$ (where $\pi_n(x)=x_n$) and the definition of $( \overline{\Omega})$: 

$\lambda_1 (A)$ has  $( \overline{\Omega})$ if and only if, for every 
$k \in \N_0$, there exists $\ell \ge k$ such that for every $m \ge \ell$ there is  $c>0$ with
\[
a_\ell^2 (n) \geq c \ a_m (n) a_k (n) \text{ for all $n \in \N_0$.}
\]

\begin{Prop}\label{regular}
  A regular K\"othe space $\lambda_1(A)$ has a prominent set if and only if it satisfies $( \overline{\Omega})$.
\end{Prop}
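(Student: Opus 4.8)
The plan is to prove the two implications separately. One direction is free: ``$(\overline\Omega)\Rightarrow$ prominent set'' is exactly Theorem~\ref{omegaprominent} applied to the Fr\'echet space $\lambda_1(A)$, and regularity is not needed for it. So the whole content of Proposition~\ref{regular} sits in the converse, and my plan there is to combine three ingredients already at hand: Terzio\v{g}lu's characterization of prominence in Fr\'echet spaces, the closed formula $\delta_n(U_\ell,U_k)=a_k(n)/a_\ell(n)$ ($\ell\ge k$) valid for regular matrices, and the mere fact that a bounded set is absorbed by each basic $0$-neighbourhood.

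For the converse, suppose $\lambda_1(A)$ has a prominent bounded set $B_0$. First I would replace $B_0$ by its closed absolutely convex hull $B$, which is still bounded and still prominent: since $B_0\subseteq B$ with $B$ bounded, one has $\Delta_b(X)\subseteq\{\xi:\xi_n\delta_n(B,U)\to 0\text{ for all }U\in\UU_0(X)\}\subseteq\{\xi:\xi_n\delta_n(B_0,U)\to 0\text{ for all }U\in\UU_0(X)\}$, and the two outer sets coincide with $\Delta(X)$ because $B_0$ is prominent (recall that a prominent set already forces $\Delta(X)=\Delta_b(X)$). Now Terzio\v{g}lu's criterion applies: for each $k\in\N_0$ there are $V\in\UU_0(X)$ and $C\ge 0$ with $\delta_n(V,U_k)\le C\,\delta_n(B,V)$ for all $n$. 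Choosing $\ell\ge k$ with $U_\ell\subseteq V$ and using monotonicity of the Kolmogorov widths in both slots, this yields $\delta_n(U_\ell,U_k)\le\delta_n(V,U_k)\le C\,\delta_n(B,V)\le C\,\delta_n(B,U_\ell)$ for all $n$.

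The last step is to turn this into the weight inequality characterizing $(\overline\Omega)$. By regularity, $\delta_n(U_\ell,U_k)=a_k(n)/a_\ell(n)$. For the right-hand side, fix any $m\ge\ell$; boundedness of $B$ gives $M_m>0$ with $B\subseteq M_mU_m$, and regularity again yields $\delta_n(B,U_\ell)\le M_m\,\delta_n(U_m,U_\ell)=M_m\,a_\ell(n)/a_m(n)$. Combining the two estimates gives $a_k(n)/a_\ell(n)\le CM_m\,a_\ell(n)/a_m(n)$ for all $n$, that is, $a_\ell(n)^2\ge c\,a_k(n)a_m(n)$ with $c=1/(CM_m)>0$. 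Since $k$ was arbitrary, $\ell\ge k$ was chosen depending only on $k$, and such a $c$ was produced for every $m\ge\ell$, this is precisely the K\"othe-space form of $(\overline\Omega)$ recalled above, which completes Proposition~\ref{regular}. I do not expect a serious obstacle here; the only point that needs care is the quantifier bookkeeping — one must make sure the neighbourhood delivered by the prominence criterion may be shrunk to a basic $U_\ell$ with $\ell\ge k$ (so the regularity formula is available) and, independently, that \emph{every} $m\ge\ell$ is admissible on the right (so the constant $c$ may legitimately depend on $m$), matching the ``$\forall k\ \exists\ell\ge k\ \forall m\ge\ell\ \exists c$'' pattern of $(\overline\Omega)$.
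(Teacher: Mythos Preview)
Your proof is correct and follows essentially the same route as the paper: invoke Theorem~\ref{omegaprominent} for one direction, and for the converse combine Terzio\v{g}lu's criterion, the regularity formula $\delta_n(U_\ell,U_k)=a_k(n)/a_\ell(n)$, and boundedness $B\subseteq M_mU_m$ to obtain the weight inequality for $(\overline\Omega)$. The only cosmetic differences are that the paper enlarges $B$ to a set of the form $\bigcap_k r_kU_k$ (using that supersets of prominent sets are prominent) rather than passing to the closed absolutely convex hull, and it takes $V=U_\ell$ directly from the basis rather than shrinking via the monotonicity argument you spell out; neither changes the substance.
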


\begin{proof}
  We still have to show necessity of $( \overline{\Omega})$ given a prominent bounded set $B$. As supersets of prominent sets are prominent we may assume that
  $B=\bigcap_{k\in\N_0} r_kU_k$ for a sequence of scalars $r_k>0$. For $k\in\N_0$, Terzio\v{g}lu's characterization mentioned above for $U=U_k$ yields $V=U_\ell$
  for some $\ell \ge k$ and $C>0$ such that 
  \[
    \delta_n(U_\ell,U_k)\le C\delta_n(B,U_\ell).
   \]
   Given $m\ge \ell$ the inclusion $B\subseteq r_mU_m$ then yields
  \[
    \delta_n(U_\ell,U_k) \le C\delta_n(B,U_\ell)\le Cr_m \delta_n(U_m,U_\ell).
  \]
The formula for $\delta_n(U_\ell,U_k)$ from above thus gives
\[
\frac{a_k(n)}{a_\ell(n)} \le Cr_m \frac{a_\ell(n)}{a_m(n)} \text{ for all $n\in\N_0$}
\]
which implies $(\overline{\Omega})$.
\end{proof}

We will finally show that products of two power series spaces of different type like $H(\D)\times H(\C)$ may have prominent bounded sets.
This shows that having prominent bounded sets is not inherited by complemented subspaces. 

We recall that for an increasing sequence $0<\alpha_n\to\infty$ the power series spaces of finite type $\Lambda_0(\alpha)=\lambda_1(A)$ and infinite type
$\Lambda_\infty(\alpha)=\lambda_1(B)$ are regular K\"othe spaces corresponding to the matrices 
\[
   A=\left( \exp\left(-\alpha_n/k\right) \right)_{(k,n) \in \N_0^2} \text{ and } B=\left( \exp\left(k\alpha_n\right) \right)_{(k,n) \in \N_0^2}.
\]
The sequence $\alpha$ is called {\it stable} if $\alpha_{2n}/\alpha_n$ is bounded (this characterizes that $\Lambda_r(\alpha)$ is isomorphic to its cartesian square).

\begin{Prop}
  For every stable sequence $\alpha_n\to\infty$ the space $\Lambda_0(\alpha)\times \Lambda_\infty(\alpha)$ has a prominent bounded set but does not satisfy 
  $( \overline{\Omega})$.
\end{Prop}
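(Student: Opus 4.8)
The plan is to separate the two assertions: first that $\Lambda_0(\alpha)\times\Lambda_\infty(\alpha)$ fails $(\overline\Omega)$, and then that it nevertheless carries a prominent bounded set.

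For the failure of $(\overline\Omega)$, I would argue that $\Lambda_\infty(\alpha)$ is a complemented subspace of the product, and $(\overline\Omega)$ is inherited by complemented subspaces (indeed by quotients). So it suffices to recall that a power series space of infinite type $\Lambda_\infty(\alpha)$ never satisfies $(\overline\Omega)$ — this is classical (see \cite[chapter 29]{MV}), but it can also be read directly from the K\"othe-space characterization of $(\overline\Omega)$ stated just above: with $b_k(n)=\exp(k\alpha_n)$ one would need $\exp(2\ell\alpha_n)\ge c\,\exp((m+k)\alpha_n)$ for all $n$, which is impossible as soon as $m>2\ell-k$ since $\alpha_n\to\infty$.

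For the existence of a prominent bounded set I would build one explicitly from Terzio\u{g}lu's characterization: an absolutely convex bounded set $B$ of a Fr\'echet space is prominent iff for every $U\in\UU_0(X)$ there are $V\in\UU_0(X)$ and $C\ge0$ with $\delta_n(V,U)\le C\,\delta_n(B,V)$ for all $n$. Writing the product with the natural fundamental system of semi-norms (the max of the $k$-th semi-norms of the two factors), the unit balls are products $U_k=U_k^0\times U_k^\infty$, and by regularity of both factors the Kolmogorov widths split: $\delta_{n}(U_\ell,U_k)$ is governed by the two ratios $a_k^0(\cdot)/a_\ell^0(\cdot)=\exp(-\alpha_\cdot(1/k-1/\ell))$ and $a_k^\infty(\cdot)/a_\ell^\infty(\cdot)=\exp(-(\ell-k)\alpha_\cdot)$, interleaved in decreasing order. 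The key point is that the infinite-type factor has \emph{much} faster decaying widths than the finite-type factor, so a well-chosen bounded set in the infinite-type factor together with the whole finite-type factor dominates the required quantity. Concretely I would try $B=B_0\times B_\infty$ where $B_0$ is a suitable bounded Hilbert/K\"othe ball in $\Lambda_0(\alpha)$ witnessing that $\Lambda_0(\alpha)$, being a $G_1$ space, has a prominent bounded set (Terzio\u{g}lu), and $B_\infty$ is some fixed bounded ball in $\Lambda_\infty(\alpha)$; then for $U=U_k$ one picks $V=U_\ell$ using prominence of $B_0$ on the first factor, and checks that on the second factor $\delta_n(U_\ell^\infty,U_k^\infty)\le C\,\delta_n(B_\infty,U_\ell^\infty)$ holds automatically because the widths of $\Lambda_\infty(\alpha)$ decay super-exponentially while a fixed bounded ball still has widths decaying exponentially — here stability of $\alpha$ is what makes the comparison of indices uniform across $n$. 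One then has to combine the two coordinatewise estimates into a single estimate for the interleaved sequence $\delta_n(U_\ell,U_k)$, which is the routine but slightly fiddly part.

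The main obstacle I expect is precisely this last bookkeeping: $\delta_n(U_\ell\times U_\ell', U_k\times U_k')$ is the non-increasing rearrangement of the union of the two width sequences, and $\delta_n(B_0\times B_\infty, U_\ell\times U_\ell')$ is likewise an interleaving, so proving $\delta_n(V,U)\le C\,\delta_n(B,V)$ for \emph{every} $n$ requires showing that the rearrangement doesn't destroy the factorwise inequalities — one must control how many ``slots'' each factor occupies up to index $n$. The clean way is to use the submultiplicativity-type estimate $\delta_{n+m}(A_1\oplus A_2, B_1\oplus B_2)\le\max(\delta_n(A_1,B_1),\delta_m(A_2,B_2))$ together with a matching lower bound, reducing everything to the two separate regular-K\"othe computations, where the explicit formula $\delta_n(U_\ell,U_k)=a_k(n)/a_\ell(n)$ and stability of $\alpha$ do the rest. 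I would also double-check that the resulting $B$ is genuinely bounded (it is, being a product of bounded sets) and absolutely convex, so that Terzio\u{g}lu's characterization applies.
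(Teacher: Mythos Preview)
Your treatment of the failure of $(\overline\Omega)$ is fine and matches what the paper takes for granted.

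The prominent-set argument, however, contains a genuine error. You propose $B=B_0\times B_\infty$ and, after choosing $\ell$ from the prominence of $B_0$, you want to check on the second factor that
\[
\delta_n(U_\ell^\infty,U_k^\infty)\le C\,\delta_n(B_\infty,U_\ell^\infty)
\]
``holds automatically''. It does not --- in fact it fails for \emph{every} bounded $B_\infty$ in $\Lambda_\infty(\alpha)$. Since $B_\infty\subseteq r_m U_m^\infty$ for all $m$, one has $\delta_n(B_\infty,U_\ell^\infty)\le r_m\exp(-(m-\ell)\alpha_n)$ for every $m$, so these widths decay \emph{faster} than any $\exp(-M\alpha_n)$, whereas $\delta_n(U_\ell^\infty,U_k^\infty)=\exp(-(\ell-k)\alpha_n)$ decays at a fixed rate. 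Your intuition that the bounded ball has the slower decay is therefore reversed, and the displayed inequality is impossible for large $n$. This is precisely the mechanism behind Terzio\u{g}lu's observation that $\Lambda_\infty(\alpha)$ has \emph{no} prominent bounded set, so any attempt to verify the Terzio\u{g}lu criterion factorwise is doomed on the infinite-type coordinate.

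The paper avoids this by taking the prominent set to be $B\times\{0\}$, where $B$ is prominent in $\Lambda_0(\alpha)$. Then $\delta_n^\times(B\times\{0\},V^0\times V^\infty)=\delta_n^0(B,V^0)$, so the second factor contributes nothing on the lower side. On the upper side one uses
\[
\delta_{2n}^\times(V^0\times V^\infty,U^0\times U^\infty)\le\max\{\delta_n^0(V^0,U^0),\,\delta_n^\infty(V^\infty,U^\infty)\},
\]
and chooses $V^\infty=U_\ell^\infty$ with $\ell$ large enough that $\delta_n^\infty(U_\ell^\infty,U_m^\infty)\le\delta_n^0(U_k^0,U_m^0)$; the maximum is then always attained on the finite-type side. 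Stability enters not where you place it, but in comparing $\delta_n^0$ with $\delta_{2n}^0$, i.e.\ in absorbing the index-doubling coming from the product estimate. Your set $B_0\times B_\infty$, being a superset of $B_0\times\{0\}$, would in the end also be prominent --- but not for the reason you give, and your proposed factorwise verification cannot be carried out.
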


\begin{proof}
  As $\Lambda_0(\alpha)$ satisfies $( \overline{\Omega})$ it has a prominent bounded set $B$, and we will show that $B\times \{0\}$ is prominent in
   $\Lambda_0(\alpha)\times \Lambda_\infty(\alpha)$.
   
  We write $U_k^0$ and $U_k^\infty$ for the canonical $0$-neighbourhoods in  $\Lambda_0(\alpha)$ and $\Lambda_\infty(\alpha)$, respectively, as well as
  $\delta_n^0$, $\delta_n^\infty$, and $\delta_n^\times$ for the Kolmogorov diameters in  $\Lambda_0(\alpha)$, $\Lambda_\infty(\alpha)$, and their product.
  
  The proof is based on the simple observation
  \[
    \delta^\times_{2n}(V^0\times V^\infty,U^0\times U^\infty) \le \max\{\delta^0_n(V^0,U^0),\delta^\infty_n(V^\infty,U^\infty)\}: 
  \]
  Indeed, if $V^r\subseteq \delta U^r +L^r$ for $r\in\{0,\infty\}$ with at most $n$-dimensional subspaces then $L^0\times L^\infty$ is an at most
  $2n$-dimensional subspace with 
  \[
    V^0\times V^\infty\subseteq \delta(U^0\times U^\infty)+ L^0\times L^\infty. 
  \]
  Since power series spaces are regular the Kolmogorov diameters $\delta_n^r$ are easily calculated, in particular, $\delta^\infty_n(V^\infty,U^\infty)$ are much smaller than
  $\delta^0_n(V^0,U^0)$. We will finally need stability to compare $\delta_n^0$ and 
  $\delta_{2n}^0$. However, the details of the proof need some care and we first give precise estimates for $\delta_{2n}^0$ which are quite easily obtained from the formula
  $\delta^0_n (U^0_k,U^0_m) = \frac{a_m(n)}{a_k(n)}=\exp((1/k - 1/m)\alpha_n) $: 
  
  \medskip
 
   For all $m \in \N_0$, there exists $s \geq m$ such that, for all $k \geq s$, we have
\[
\delta^0_n (U^0_k,U^0_m) \leq \delta^0_{2n} (U^0_k,U^0_s) \text{ for every $n \in \N_0$.}
\]

Indeed, we take $C > 0$ such that $\alpha_{2n}/\alpha_n \leq C$ for all $n \in \N$. 
If $m \in \N$ is given, we choose $s \geq (C+1)m$. Then, if $k > s$, we have
\[
\frac{\alpha_{2n}}{\alpha_n} \leq C \leq \frac sm -1\leq \dfrac{1/m - 1/k}{1/s} \leq \dfrac{1/m - 1/k}{1/s - 1/k}
\]
for every $n \in \N_0$. Therefore, $(1/s - 1/k) \alpha_{2n} \leq (1/m - 1/k) \alpha_n$ for all $k\ge s$ which
implies
\[
\delta^0_n (U^0_k,U^0_m) = \exp((1/k - 1/m)\alpha_n) \leq \exp((1/k-1/s)\alpha_{2n}) = \delta^0_{2n} (U^0_k,U^0_s). 
\]

Let now $U$ be a $0$-neighbourhood in $\Lambda_0(\alpha)\times \Lambda_\infty(\alpha)$ which we may assume to be of the form
$U=U_m^0\times U_m^\infty$ for some $m \in \N$ (because the sets $\frac 1m U^r_m$ are bases of the $0$-neighbourhood filters). 
We choose $s$ as above and then, according to Terzio\v{g}lu's characterization of prominence,
$k\ge s$ and $C\ge 0$ such that for all $n\in\N_0$
\[
  \delta^0_n( U^0_{k},U^0_{s}) \leq C \delta^0_n \left ( B,U^0_{k} \right).
\]
We then get for all $n\in\N_0$
\[
\delta^0_n( U^0_{k},U^0_{m}) \leq \delta^0_{2n}(U^0_{k},U^0_s) \leq C \delta^0_{2n} \left ( B,U^0_{k} \right).
\]
 Moreover, since $\alpha_{n+1}\le \alpha_{2n}\le C \alpha_n$ we find $\ell\in\N$ such that
\begin{align*}
\delta_n^\infty(U_\ell^\infty,U_m^\infty) & =\exp((m-\ell)\alpha_n) \leq \exp((1/k - 1/m)\alpha_{n+1})\\
  & =\delta^0_{n+1}(U^0_k,U^0_m)\le \delta^0_{n}(U^0_k,U^0_m)
\end{align*}
for all $n \in \N_0$. 
With these choices we get
\begin{align*}
&\delta^\times_{2n} \left ( U^0_{k} \times U^\infty_\ell,U^0_{m} \times U^\infty_{m} \right ) 
\leq \max \left \{ \delta^0_n ( U^0_{k},U^0_{m}), \delta^\infty_n (U^\infty_{\ell},U^\infty_{m}) \right \} \\
& =  \delta^0_n( U^0_{k},U^0_{m})
\leq C \delta^0_{2n}(B,U^0_{k}) 
= C \delta^\times_{2n}( B \times \{ 0 \}, U^0_{k} \times U^\infty_{\ell})
\end{align*}
for every $n \in \N_0$ (the last equality is immediate from the definition of Kolmogorov widths).
Similarly, we have for odd dimensions $2n+1$
\begin{align*}
&\delta^\times_{2n+1} \left ( U^0_{k} \times U^\infty_\ell,U^0_{m} \times U^\infty_{m} \right ) 
\leq \max \left \{ \delta^0_{n+1} ( U^0_{k},U^0_{m}), \delta^\infty_n (U^\infty_{\ell},U^\infty_{m}) \right \} \\
&= \delta^0_{n+1}( U^0_{k},U^0_{m})
\leq C \delta^0_{2n+2}(B,U^0_{k}) \leq C \delta^0_{2n+1}(B,U^0_{k}) \\
&= C \delta^\times_{2n+1}( B \times \{ 0 \}, U^0_{k} \times U^\infty_{\ell}).
\end{align*}
We have thus shown for all $n\in\N_0$
\[
  \delta^\times_{n} \left ( U^0_{k} \times U^\infty_\ell,U^0_{m} \times U^\infty_{m} \right ) \le C \delta^\times_{n}( B \times \{ 0 \}, U^0_{k} \times U^\infty_{\ell}),
\]
which implies that $B\times \{0\}$ is a prominent bounded set in $\Lambda_0 (\alpha) \times \Lambda_{\infty} (\alpha)$.
\end{proof}

Combined with proposition \ref{regular} we get the following application of Terzio\v{g}u's invariant of having a bounded prominent set:

\begin{Cor}
For every stable sequence $\alpha_n\to\infty$ the space $\Lambda_0(\alpha)\times \Lambda_\infty(\alpha)$ is not isomorphic to a regular K\"othe space.
\end{Cor}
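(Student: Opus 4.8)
The plan is to combine the two facts already established in the paper: by the previous proposition, $\Lambda_0(\alpha)\times\Lambda_\infty(\alpha)$ has a prominent bounded set, and by Proposition~\ref{regular}, a regular K\"othe space has a prominent bounded set if and only if it satisfies $(\overline\Omega)$. Since having a prominent bounded set is a topological invariant (as recalled right after Terzio\v glu's definition), it suffices to show that $\Lambda_0(\alpha)\times\Lambda_\infty(\alpha)$ does \emph{not} satisfy $(\overline\Omega)$; then it cannot be isomorphic to any regular K\"othe space, for such a space would have to have a prominent set by the invariance and hence satisfy $(\overline\Omega)$, contradicting invariance of $(\overline\Omega)$ under isomorphism.

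So the only thing to verify is the failure of $(\overline\Omega)$ for the product, and this is already asserted (without proof) in the statement of the previous proposition; I would simply supply that short argument. First I would recall that $(\overline\Omega)$ is inherited by complemented subspaces: if $X$ satisfies $(\overline\Omega)$ then so does every complemented subspace of $X$, since one can restrict the dual norms and use a continuous projection to transport the estimate. Hence, if $\Lambda_0(\alpha)\times\Lambda_\infty(\alpha)$ satisfied $(\overline\Omega)$, then $\Lambda_\infty(\alpha)$ would too. But $\Lambda_\infty(\alpha)$ does not satisfy $(\overline\Omega)$: using the K\"othe-space characterization recalled in the excerpt (with $a_k(n)=\exp(k\alpha_n)$), the condition $(\overline\Omega)$ would require, for $k=0$, some $\ell\ge 0$ such that for every $m\ge\ell$ there is $c>0$ with $\exp(2\ell\alpha_n)\ge c\exp(m\alpha_n)\exp(0\cdot\alpha_n)=c\exp(m\alpha_n)$ for all $n$; since $\alpha_n\to\infty$, choosing $m>2\ell$ makes the left side exponentially smaller than the right side, a contradiction.

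Assembling these pieces: $\Lambda_0(\alpha)\times\Lambda_\infty(\alpha)$ has a prominent bounded set; if it were isomorphic to a regular K\"othe space $\lambda_1(A)$, then $\lambda_1(A)$ would have a prominent bounded set, hence by Proposition~\ref{regular} would satisfy $(\overline\Omega)$, hence (as $(\overline\Omega)$ is an isomorphic invariant) $\Lambda_0(\alpha)\times\Lambda_\infty(\alpha)$ would satisfy $(\overline\Omega)$; but we have just seen it does not, since its complemented subspace $\Lambda_\infty(\alpha)$ fails $(\overline\Omega)$. This contradiction proves the corollary.

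I expect no serious obstacle here: every ingredient is either quoted from the excerpt or is a standard, well-documented permanence property of $(\overline\Omega)$. The only point requiring mild care is the direction of the inheritance of $(\overline\Omega)$ under complementation and the correct bookkeeping of which norm is the ``large'' one in the defining inequality; one should state explicitly that $(\overline\Omega)$ passes to complemented subspaces (this is classical, see \cite[Chapter~29]{MV}) so that the reduction to $\Lambda_\infty(\alpha)$ is clean, and then the failure of $(\overline\Omega)$ for $\Lambda_\infty(\alpha)$ is an immediate consequence of the K\"othe-space characterization already displayed above together with $\alpha_n\to\infty$.
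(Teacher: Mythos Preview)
Your argument is correct and follows exactly the route the paper intends: combine the previous proposition (prominent set exists, $(\overline\Omega)$ fails) with Proposition~\ref{regular} via the invariance of both properties under isomorphism. You merely spell out the failure of $(\overline\Omega)$ for $\Lambda_\infty(\alpha)$, which the paper states without proof; your computation with $a_k(n)=e^{k\alpha_n}$ and the inheritance of $(\overline\Omega)$ by complemented subspaces are both standard and correctly executed.
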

 
We thank D.\ Vogt for the remark that this corollary can also be deduced from a result of Zahariuta \cite[theorem 12]{Za73}.
\bibliographystyle{amsalpha}
\bibliography{dimensions-references}
\end{document}